\newtheorem{thm}{Theorem}[section]
\newtheorem{prop}[thm]{Proposition}
\newtheorem{lem}[thm]{Lemma}
\newtheorem{rem}[thm]{Remark}
\numberwithin{equation}{section}
\newcommand{\R}{\mathbb{R}}
\newcommand{\A}{\mathcal{A}}
\newcommand{\LRFH}{{\rm LRFH}}
\newcommand{\supp}{\operatorname{supp}}
\newcommand{\Crit}{\operatorname{Crit}}
\title{Two-boost problem\\ for the Newtonian potential at the infinity}
\author{Jagna Wi\'{s}niewska }
\address{Department of Mathematics, University of Augsburg}
\begin{document}
\maketitle

\begin{abstract}
The two-boost problem in space mission design asks whether two
points of position space can be connected by a Hamiltonian path on a fixed energy level set. We provide a positive answer for a class of systems having the same behaviour at infinity as the restricted three-body problem by relating it to the Lagrangian Rabinowitz Floer homology computed in \cite{Wisniewska2024}. The main technical challenge is to prove the boundedness of the corresponding sets of Reeb chords.
\end{abstract}

\section{Introduction}
The two-boost problem considers a very practical question in the space mission design:
Can we send a rocket in the gravitational field of one, or more, celestial bodies, using the engines only at the beginning and at the end of the journey?

The research on the two-boost problem has been initiated by the ground-breaking work of Hohmann on the attainability of heavenly bodies \cite{Hohmann}. Although this work was written almost a hundred years ago, the Hohmann transfer is still one of the crucial ingredients in space mission design \cite{Vallado}. Hohmann considered the two-boost problem in the setting of the Kepler problem, with the rocket moving under the influence of the gravitational field of just one heavenly body. The Hohmann transfer is a transfer between two circular orbits in the Kepler problem with the help of a Kepler ellipse which is tangent to the two circles. It requires two tangential boosts, one to transfer from the first circle to the ellipse and a second one to transfer from the ellipse to the second circle. This means that for the Kepler problem, two points in the
phase space can always be connected with the help of two boosts. The motivating
question for this paper is whether this continues to hold for the three body problem where the rocket is moving in the gravitational field of two celestial bodies instead of just one.

The general setup for the two-boost problem is as follows: Consider the cotangent bundle $T^*Q$ of a manifold $Q$ with its canonical exact symplectic form $\omega=d\lambda$, $\lambda=p\,dq$, and a Hamiltonian $H:T^*Q\to\R$. Given two points $q_0,q_1\in Q$ and an energy value $c$, the two-boost problem asks for the existence of a Hamiltonian orbit of energy $c$ connecting the cotangent fibers $T_{q_0}^*Q$ and $T_{q_1}^*Q$. In other words, we are interested in chords $(v,\eta)\in W^{1,2}([0,1], T^*Q)\times\R$ satisfying
\begin{equation}\label{Def(v,eta)}
\partial_tv=\eta X_H(v),\qquad v([0,1])\subseteq H^{-1}(c),\quad \textrm{and}\quad v(i)=q_i\quad \textrm{for}\quad i=0,1.
\end{equation}

These chords arise as critical points of the \emph{Rabinowitz action functional}
\begin{align*}
\A^{H-c}_{q_0,q_1} & : \mathscr{H}_{q_0,q_1}\times \R \to \R,\\
\A^{H-c}_{q_0,q_1}(v,\eta) & := \int_0^1 \lambda(\partial_t v)dt - \eta \int_0^1 (H-c)(v(t))dt
\end{align*}
defined on the path space
$$
\mathscr{H}_{q_0,q_1}:= \left\lbrace v \in W^{1,2}([0,1], T^*Q)\ \big|\ v(i)\in T^*_{q_i}Q\quad \textrm{for}\quad i=0,1\right\rbrace.
$$
This action functional was first defined by Rabinowitz in \cite{Rabinowitz1978} on the loop space to study the existence of periodic orbits on compact star-shaped hypersurfaces in $\R^{2n}$. In \cite{CieliebakFrauenfelder2009} Cieliebak and Frauenfelder defined a Floer-type homology of the Rabinowitz action functional on the loop space and called it the Rabinowitz Floer homology. In \cite{Merry2014} Merry
defined the Lagrangian Rabinowitz Floer homology $LRFH_*\left(\A^{H-c}_{q_0,q_1} \right)$ corresponding to the Rabinowitz action functional on the chord space. He showed that for high enough energy level sets in magnetic Hamiltonian systems the corresponding Lagrangian Rabinowitz Floer homology is well defined and non-trivial, hence providing a positive answer to the two-boost problem. The great advantage of using the Floer theory in the analysis of Hamiltonian systems is the continuation principle, which assures that a Floer-type homology is invariant under compact perturbations of the Hamiltonian. 
However, in both cases the Floer type homologies were defined under the assumption that the underlying manifold $Q$ is \emph{compact}.

In recent years several research groups in symplectic geometry independently developed new symplectic techniques to analyse Hamiltonian systems in \emph{non-compact} settings: Brugues \cite{Brugues2024}, Fontana \cite{Fontana2023}, Miranda \cite{Brugues2024, Fontana2023, Oms2021}, Peralta \cite{Fontana2023} and Oms \cite{Brugues2024, Fontana2023, Oms2021}\linebreak used the notion of $b$-symplectic and $b$-contact manifolds, which generalize the standard definitions to include forms with singularities, whereas in \cite{Ganatra2020} Ganatra, Pardon and Shende introduced the notion of Liouville sectors to describe a type of Liouville domains with boundary. However, the first generalizations of Floer-type homologies to the non-compact setting and the ones most relevant to this paper were the following: 

In \cite{Pasquotto2017} and \cite{Pasquotto2018} together with Pasquotto we extended the definition of Rabinowitz Floer homology to include examples of non-compact hypersurfaces. In \cite{Fauck2021} together with Fauck and Merry we computed the Rabinowitz Floer homology of a class of hyperboloids and used it to show the existence of periodic orbits on compact perturbations thereof. In our last paper \cite{Wisniewska2024} together with Cieliebak, Frauenfelder and Miranda we have extended the definition of the Lagrangian Rabinowitz Floer homology to the positive energy level sets of the following magnetic Hamiltonian:
\begin{equation}\label{DefH0}
\begin{aligned}
H_0(q_1,q_2,p_1,p_2) & :=\frac{1}{2}(p_1^2+p_2^2)+ p_1q_2-p_2q_1 = \frac{p_r^2}{2}+\frac{p_\theta^2}{2r^2}+p_\theta.
\end{aligned}
\end{equation} 
Here $(r,\theta)$ are the polar coordinates on the plane and $(p_r,p_\theta)$ are their conjugate momenta. More precisely, we have introduced the following set of smooth functions constant outside a compact set:
\begin{equation}\label{DefHset}
\mathcal{H}:=\left\lbrace h\in C^\infty(T^*\R^2)\ \Big|\ dh\in C_c^\infty(T^*\R^2),\quad h>0,\quad h-dh(p\partial_p)> 0\right\rbrace.
\end{equation}
and calculated the positive Lagrangian Rabinowitz Floer homology for Hamiltonians of the form $H_0-h$ with $h\in\mathcal{H}$ to be
$$
\LRFH_*^+\left(\A^{H_0-h}_{q_0,q_1}\right)=
\begin{cases}
\mathbb{Z}_2 & \textrm{for}\quad *=1/2,\\
0 & \textrm{otherwise}.
\end{cases}
$$
An immediate result of this theorem \cite[Thm. 1.2]{Wisniewska2024} is that $\Crit (\A^{H_0-h}_{q_0,q_1})\neq \emptyset$, which provides an affirmative answer to the two-boost problem in this setting. This was the first step towards answering the two-boost problem for the three body problem, and this paper, being a sequel to \cite{Wisniewska2024}, builds on this result.  

Returning to the question of two-boost problem in celestial mechanics, let us consider the \emph{planar circular restricted three-body problem}. It describes a Hamiltonian system of two bodies (the primaries) move under their mutual graviational attraction in circles around their common center of mass in a plane, a third body (ex. a satellite or a rocket) of negligible mass moves in the same plane in the gravitational field of the primaries. The evolution of this system in time can be described in the roating coordinate system as the Hamiltonian flow of the Hamiltonian $H:=H_0-V$, where $H_0$ is the magnetic Hamiltonian defined in \eqref{DefH0} corresponding to the sum of the kinetic energy and the Coriolis force, and $V$ is the gravitational potential of the primaries:
\begin{equation}\label{3BDPotential}
V(r,\theta)=\frac{1-\mu}{\sqrt{r^2+2r\mu \cos \theta +\mu^2}}+\frac{\mu}{\sqrt{r^2-2r(1-\mu) \cos \theta +(1-\mu)^2}}
\end{equation}

In contrast to the settings in \cite{CieliebakFrauenfelder2009} and \cite{Merry2014}, here the position space\linebreak $Q:=\R^2\setminus\{(-\mu, 0), (1-\mu,0)\}$ is non-compact, which reflects the possibility of collisions with the two primaries at $(-\mu, 0)$ and $(1-\mu,0)$ or an escape to infinity. Our goal is to answer the two-boost problem in this setting. Since the compactness due to collisions can be removed by the Birkhoff regularization \cite{Birkhoff1915}, which simultaneously regularizes collisions at both primaries, we will focus our attention on the noncompactness at the infinity in $\R^2$. For this we will consider smooth Hamiltonians on $T^*\R^2$, which coincide with the Hamiltonian of the restricted circular planar three-body problem outside of a compact set. 

We introduce a class of Hamiltonians defined
\begin{equation}\label{DefH}
H(q,p):= H_0(q,p) - V(q),
\end{equation}
where $H_0$ is the quadratic Hamiltonian defined in \eqref{DefH0} and $V\in C^\infty(\R^2)$ is a non-negative potential function, for which there exists $a>0$, such that for all $r>R_1$ we have:
\begin{equation}\label{DefV}
V(r, \theta) \leq \frac{a}{r}\qquad\textrm{and}\qquad \partial_r V(r,\theta) +\frac{2}{r}V(r,\theta)\geq 0.
\end{equation}
The main result of this paper is providing an affirmative answer to the two-boost problem for the class of Hamiltonians defined in \eqref{DefH}:

\begin{thm} \label{thm:ExistenceOfChords}
Consider a Hamiltonian $H$ as in \eqref{DefH} and fix $q_0,q_1 \in \R^2$, such that $|q_0|,|q_1|\leq R_1$. Denote by $B(R_1)$ a ball in $\R^2$ with radius $R_1$ and center at the origin.
If one of the following conditions is satisfied:
\begin{enumerate}[label=\arabic*.]
\item We choose energy value
\begin{equation}\label{CondC}
c> \max\left\lbrace \sqrt[3]{32a^2}, \sqrt{4a\left(3 R_1+2\sqrt[3]{2a}\right)}\right\rbrace,
\end{equation}
\item We choose the energy value $c>\sqrt{2aR_1}$ and, additionally, the potential function $V$ is rotationally invariant outside of $B(R_1)$, i.e. for all $r\geq R_1$ we have $\partial_\theta V(r,\theta)=0$,
\end{enumerate}
then there exists a solution $(v,\eta)\in W^{1,2}([0,1], T^*\R^2)\times\R$ to \eqref{Def(v,eta)}, such that\linebreak $v([0,1])\subseteq T^*B(R_1)$.
\end{thm}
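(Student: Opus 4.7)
The plan is to reduce Theorem~\ref{thm:ExistenceOfChords} to the non-triviality of the positive Lagrangian Rabinowitz Floer homology computed in \cite[Thm.~1.2]{Wisniewska2024} via an approximation argument. Concretely, I would fix a smooth cutoff $\chi_n\in C^\infty(\R^2)$ with $\chi_n\equiv 1$ on $B(n)$ and $\chi_n\equiv 0$ outside $B(n+1)$, and set $h_n:=\chi_n V+\epsilon$ for some small $\epsilon>0$. Then $h_n\in\mathcal{H}$: it is strictly positive, $dh_n$ has compact support, and since $h_n$ depends only on $q$ one has $dh_n(p\partial_p)=0$, so $h_n-dh_n(p\partial_p)=h_n>0$. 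For every $n\geq R_1$ the Hamiltonian $H_0-h_n$ coincides with $H-\epsilon$ on $T^*B(n)$, and \cite[Thm.~1.2]{Wisniewska2024} provides a critical point $(v_n,\eta_n)$ of $\A^{H_0-h_n-(c-\epsilon)}_{q_0,q_1}$ from the generator of $\LRFH^+_{1/2}$.

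The heart of the proof is then the a priori bound $v_n([0,1])\subseteq T^*B(R_1)$. Once this is in place, $H_0-h_n$ agrees with $H-\epsilon$ along the image of $v_n$, so $(v_n,\eta_n)$ solves \eqref{Def(v,eta)} for the original $H$ at energy $c$ after a harmless constant shift, and the theorem follows. To prove the bound I would study the radial function $r(t):=|\pi v_n(t)|$ on $[0,1]$ and rule out an interior maximum $t^*$ with $r(t^*)>R_1$. At such a $t^*$ the Hamilton equations yield $p_r(t^*)=0$ and $\ddot r(t^*)=\eta_n^2\bigl(p_\theta^2/r^3+\partial_r V\bigr)\leq 0$, while the energy identity becomes $\frac{p_\theta^2}{2r(t^*)^2}+p_\theta-V(r(t^*),\theta(t^*))=c$. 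Inserting $\partial_r V\geq -2V/r$ into the convexity inequality gives $p_\theta^2\leq 2\,r(t^*)^2\,V(r(t^*),\theta(t^*))\leq 2a\,r(t^*)$, while the energy identity together with $V\leq a/r$ pins $p_\theta$ in the window $|p_\theta-c|\leq a/r(t^*)$. Combining these two estimates should produce a scalar inequality relating $r(t^*)$ and $c$ which, once $c$ exceeds the first threshold in \eqref{CondC}, forces $r(t^*)\leq R_1$, contradicting the assumption.

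In case 2 the angular momentum $p_\theta$ is conserved along any sub-arc of $v_n$ contained in $\{r\geq R_1\}$, so the excursion is governed by the one-dimensional effective potential $U_{\mathrm{eff}}(r):=\frac{p_\theta^2}{2r^2}+p_\theta-V(r)$. Comparing the energy identity at the entry point $r=R_1$ and at the radial maximum $r=r(t^*)$, for the same conserved value of $p_\theta$, and using $V\leq a/r$, I expect to close the argument under the milder hypothesis $c>\sqrt{2aR_1}$.

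The main obstacle is unambiguously the a priori bound in case 1: without conservation of angular momentum one cannot reduce the analysis to a one-dimensional radial ODE, and the interplay of the Coriolis term $+p_\theta$ in $H_0$ with the attractive potential $V$ makes the turning-point estimate delicate; in particular, the simple two-inequality combination sketched above controls $r(t^*)$ only in a finite window and must presumably be supplemented by an iteration or integral identity. The two explicit thresholds in \eqref{CondC} should emerge from optimising the resulting algebraic relation between $r(t^*)$, $p_\theta$ and $c$ over all radial maxima compatible with both the energy constraint and the monotonicity of $r^2V$.
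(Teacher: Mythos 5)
Your overall architecture is the paper's: perturb $H$ to some $H_0-h$ with $h\in\mathcal{H}$, invoke \cite[Thm.~1.2]{Wisniewska2024} to get a chord, and confine it to $T^*B(R_1)$ by ruling out interior maxima of $r\circ v$ using the identity $\{H,\{H,r\}\}=\frac{p_\theta^2}{r^3}+\partial_rV=\frac{2}{r}(H-p_\theta)+\bigl(\frac{2}{r}V+\partial_rV\bigr)$. However, there are two gaps. The smaller one: $h_n=\chi_nV+\epsilon$ is \emph{not} in $\mathcal{H}$. Since $h_n$ depends only on $q$, the support of $dh_n$ is a full preimage of an annulus under the bundle projection, hence unbounded in the fibers, so $dh_n\notin C_c^\infty(T^*\R^2)$ as \eqref{DefHset} requires. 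The paper must multiply by a second cutoff $\chi(H_0-\sup V-c)$ in the momentum direction and then re-verify membership in $\mathcal{H}$ (the term $dh(p\partial_p)$ is no longer identically zero and its sign has to be checked).

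The decisive gap is the a priori bound in Case 1, which you correctly identify as the main obstacle but whose resolution you misdiagnose. Your two pointwise turning-point inequalities, $p_\theta^2\le 2ar$ and $|c-p_\theta|\le a/r$, combine to $(c-a/r)^2\le 2ar$, which is contradictory only for $r$ up to roughly $c^2/2a$; beyond that radius they are compatible, and for the genuine Hamiltonian \eqref{DefH} (the rotating Kepler problem being the model) there really are orbits on $H^{-1}(c)$ with radial turning points at arbitrarily large radius, namely in the elliptic region where $H-p_\theta<0$. Consequently, sending the cutoff radius $n\to\infty$ is exactly the wrong move: for large $n$ the confinement of $\Crit\A^{H_0-h_n}_{q_0,q_1}$ to $T^*B(R_1)$ should be expected to \emph{fail}, and no iteration or integral identity on the unperturbed system rescues it. The paper instead fixes the radial cutoff at the specific finite radius $R_2=\frac{1}{8a}(c^2+2aR_1)<\frac{c^2}{2a}$: in the annulus $B(R_2)\setminus B(R_1)$ the energy constraint alone forces $c-p_\theta\ge\frac{c^2-2ar}{2(c+r^2)}>0$ (Lemma \ref{lem:energy}), which excludes turning points there, while outside $B(R_2)$ the perturbed Hamiltonian is exactly $H_0$, whose flow conserves $p_\theta$, so positivity of $c-p_\theta$ propagates outward from the crossing of $\partial B(R_2)$. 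Moreover, the cutoff contributes a negative term $\chi_0'V$ to $\{H_1,\{H_1,r\}\}$ in the annulus which your sketch never sees; absorbing it requires the annulus width $R_2-R_1$ to exceed $a/(c-p_\theta)$, and balancing this against $R_2<c^2/2a$ is precisely where both thresholds in \eqref{CondC} come from — not from optimizing the algebraic relation at turning points of the uncut system. The same remark applies, more mildly, to Case 2: your entry-point comparison at $r=R_1$ using conservation of $p_\theta$ is the right idea (it is the paper's Proposition \ref{prop:RotBound}), but the cutoff annulus must still be handled, which is why the paper takes $R_2=R_1+2a/e$ with $e=\frac{c^2-2aR_1}{2(c+R_1^2)}$.
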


The proof of Theorem \ref{thm:ExistenceOfChords} consists of two steps: the first step is the result from \cite[Thm. 1.2]{Wisniewska2024} mentioned above, where we consider Hamiltonians $h$ is a compact perturbation form the set $\mathcal{H}$ defined in \eqref{DefHset}, and we show that the corresponding positive Lagrangian Rabinowitz Floer homology $LRFH_*^+\left( \A_{q_0,q_1}^{H_0-h}\right)$ is non trivial. This immediately gives us that $\Crit^+ \left( \A_{q_0,q_1}^{H_0-h}\right)\neq \emptyset$.

The final step to prove Theorem \ref{thm:ExistenceOfChords} is the following proposition:
\begin{prop}\label{prop:chords}
Consider a Hamiltonian $H$ as in \eqref{DefH} and a pair $q_0,q_1 \in \R^2$, such that $|q_0|,|q_1|\leq R_1$. Let $\mathcal{H}$ be the set of Hamiltonians as in set \eqref{DefHset}. Then 
\begin{enumerate}[label=\arabic*.]
\item for every energy value $c>0$ satisfying \eqref{CondC}, there exists $h \in \mathcal{H}$, such that $\Crit^+ (\A^{H_0-h}_{q_0,q_1})\subseteq \Crit^+ (\A^{H-c}_{q_0,q_1})$.
\item for every value $c>\max\{\inf V, \sqrt{2aR_1}\}$, there exists $h \in \mathcal{H}$, such that $\Crit^+ (\A^{H_0-h}_{q_0,q_1})= \Crit^+ (\A^{H-c}_{q_0,q_1})$, provided that for all $r\geq R_1$ we have $\partial_\theta V(r,\theta)=0$.
\end{enumerate}
\end{prop}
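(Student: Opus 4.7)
The overall strategy is to build an $h\in\mathcal{H}$ that agrees with $V+c$ on a ball $B(R^*)$ large enough to contain all relevant chords, and then to prove the a priori confinement that every critical point of $\mathcal{A}^{H_0-h}_{q_0,q_1}$ with endpoints in $T^*B(R_1)$ stays inside $T^*B(R^*)$. On that region the two functions $H_0-h$ and $H-c$ coincide, so their Hamiltonian vector fields and their zero level sets agree, and any such chord is automatically a critical point of $\mathcal{A}^{H-c}_{q_0,q_1}$. To construct $h$, pick a smooth cutoff $\chi\in C^\infty([0,\infty),[0,1])$ with $\chi\equiv 1$ on $[0,R^*]$ and compactly supported, and set $h(q):=\chi(|q|)V(q)+c$. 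Then $h\geq c>0$; because $h$ depends only on $q$ the Liouville vector field $Z=p\,\partial_p$ annihilates $h$, so $h-dh(Z)=h>0$, and $dh$ is compactly supported: hence $h\in\mathcal{H}$.

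The heart of the proof is the confinement step. Let $(v,\eta)\in\Crit^+(\mathcal{A}^{H_0-h})$ and write $v=(q,p)$ in polar coordinates $(r,\theta,p_r,p_\theta)$. Argue by contradiction: assume $r_{\max}:=\max_t|q(t)|>R_1$, attained at an interior $t_\ast$, so that $p_r(t_\ast)=0$. Throughout the portion of the trajectory where $R_1\leq r\leq R^*$, the chord obeys the $X_H$-equations on $\{H=c\}$, giving
\[
\tfrac{1}{2}p_r^2+\tfrac{p_\theta^2}{2r^2}+p_\theta-V=c.
\]
At $t_\ast$ the inequality $\ddot r(t_\ast)\leq 0$, combined with $\partial_r V+2V/r\geq 0$ from \eqref{DefV}, forces $p_\theta(t_\ast)\geq c$; substituting into the energy identity with $p_r=0$ and using $V\leq a/r$ then yields $r_{\max}\geq c^2/(2a)$. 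The argument is closed by a radial monotonicity estimate of the form $\tfrac{d}{dt}(rp_r)=\eta(p_r^2+p_\theta^2/r^2+r\partial_r V)\geq 2\eta(c-p_\theta)$, together with control on the angular momentum via $\dot p_\theta=\eta\,\partial_\theta V$, both integrated along the excursion outside $B(R_1)$. The bounds in \eqref{CondC} are calibrated precisely so that these estimates are incompatible with the chord returning to $B(R_1)$ at $t=1$; one then fixes $R^*$ just above the resulting quantitative upper bound on $r_{\max}$.

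For part 2, rotational invariance of $V$ outside $B(R_1)$ makes $p_\theta$ a first integral there, so the exterior problem reduces to an explicit one-dimensional effective-potential analysis. Tracking $p_\theta$ through the crossing of $\partial B(R_1)$ and using $V(R_1,\cdot)\leq a/R_1$ at the boundary, one shows directly that $p_\theta(t_\ast)\geq c$ combined with energy conservation at $r=R_1$ forces $a/R_1\geq c^2/(2R_1^2)$, i.e.\ $c\leq\sqrt{2aR_1}$, contradicting the hypothesis. The same confinement applies to chords of $X_H$ on $\{H=c\}$ themselves, upgrading the inclusion of part 1 to an equality of critical sets. The main obstacle is the confinement step in part 1, where $p_\theta$ is not conserved outside $B(R_1)$: controlling its total variation in terms of $\partial_\theta V$ and weaving this with the radial estimates above so as to reproduce the algebraic form of \eqref{CondC} (in particular the cubic-root contributions $\sqrt[3]{32a^2}$ and $\sqrt[3]{2a}$) is precisely the ``boundedness of Reeb chords'' result advertised in the abstract.
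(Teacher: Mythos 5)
Your overall strategy --- truncate $V$ to obtain $h\in\mathcal{H}$, then prove an a priori confinement of the perturbed chords to the region where $H_0-h$ and $H-c$ coincide --- is the same as the paper's. But there are two genuine gaps. First, your perturbation $h(q)=\chi(|q|)V(q)+c$ is \emph{not} in $\mathcal{H}$: since $h$ depends only on $q$, the support of $dh$ is the full preimage $\pi^{-1}\bigl(\supp d(\chi V)\bigr)$ under the footpoint projection, a cylinder with unbounded fibers, so $dh\notin C_c^\infty(T^*\R^2)$. This is why the paper multiplies by a second cutoff $\chi_1=\chi(H_0-\sup V-c)$ in the momentum direction (equations \eqref{DefChi1}--\eqref{DefH1}) and then verifies $\chi_0\chi_1V+c\in\mathcal{H}$; without it the input theorem from \cite{Wisniewska2024} does not apply.

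Second, the confinement step for part 1 --- which you yourself flag as the ``main obstacle'' --- is not actually carried out, and the route you sketch cannot work: \eqref{DefV} imposes \emph{no} bound on $\partial_\theta V$, so there is nothing with which to ``control the total variation of $p_\theta$'' via $\dot p_\theta=\eta\,\partial_\theta V$. The paper's resolution avoids this entirely by exploiting the cutoff: outside $B(R_2)$ the perturbed Hamiltonian equals the rotationally invariant $H_0$ \emph{exactly}, so $p_\theta$ is conserved there no matter what $V$ does, and its value at the last crossing of $r=R_2$ is controlled by the pointwise energy estimate of Lemma \ref{lem:energy} (valid because $R_2<c^2/(2a)$ under \eqref{CondC}); inside the annulus $B(R_2)\setminus B(R_1)$ a maximum of $r$ is excluded by Lemma \ref{lem:noMax}, again using only the pointwise energy identity, not conservation of $p_\theta$. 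Relatedly, your sketch ignores the transition region where $\chi'\neq 0$: there $\ddot r$ acquires the extra term $\eta^2\chi_0'V\le 0$, which can destroy the positivity of $\{H_1,\{H_1,r\}\}$ unless the width $R_2-R_1$ is chosen large enough that $a/(R_2-R_1)$ is dominated by the lower bound on $c-p_\theta$ --- this is exactly what the choices $R_2=R_1+2a/e$ (Lemma \ref{lem:emptySet}, part 2) and $R_2=\tfrac{1}{8a}(c^2+2aR_1)$ (Lemma \ref{lem:noMax}, part 1) are calibrated to achieve, and where the specific constants in \eqref{CondC} come from. Your part 2 argument is essentially the paper's (Proposition \ref{prop:RotBound} plus Lemma \ref{lem:energy}), but it too must be run for the cutoff Hamiltonian $H_1$, not for $H$, and hence also needs the $\chi_0'V$ correction handled.
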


This result together with \cite[Thm. 1.2]{Wisniewska2024} gives us existence of solutions to \eqref{Def(v,eta)} in the setting of Theorem \ref{thm:ExistenceOfChords} and concludes its proof.

The proof of this proposition relies on an observation that for high enough energies the Hamiltonian flow outside a compact set has hyperbolic behaviour. To get an intuition we can consider the potential where $V(r)=\frac{1}{r}$ for $r\geq R_1$. Then the Hamiltonian $H_0-V$ coincides with the rotating Kepler problem outside of a compact set. In the Kepler problem we know that the solutions are ellipses on the negative energy level sets, parabolas for the zero-level set, and hyperbolas on the positive level sets. In the rotating Kepler problem, the dynamics on a fixed level sets changes and on some level sets we can have all three types of trajectories: elliptic, parabolic and hyperbolic. However, for a fixed pair of endpoints we can choose the energy high enough, to assure that all the Floer trajectories passing through them have hyperbolic behaviour. In particular, once they leave a compact set, they cannot come back. For details see Subsection \ref{ssec:E&P}.

An important example of a Hamiltonian of the form as in \eqref{DefH} is a function which coincides with the Hamiltonian of the restricted planar circular three body problem outside of a compact set. In this setting we obtain the following result:
\begin{thm}\label{thm:Chords3BD}
For fixed constant $\mu \in (0, \frac{1}{2}]$ consider a Hamiltonian $H:= H_0-V$ with $H_0$ as in \eqref{DefH0} and the potential function $V$ is given by \eqref{3BDPotential} for $r\geq 1$. Then for any pair $q_0,q_1\in \R^2$ if we denote 
\begin{equation}\label{Cond3BD}
 R_1:=\max\left\lbrace 2(1-\mu), |q_0|, |q_1|\right\rbrace \qquad \textrm{and} \qquad a:= \frac{(1-\mu)R_1}{R_1-\mu}+\frac{\mu R_1}{R_1- (1-\mu)},
\end{equation}
and take an energy value $c>0$ satisfying \eqref{CondC} with the constants above,
there exists a solution $(v,\eta)\in W^{1,2}([0,1], T^*\R^2)\times\R$ to \eqref{Def(v,eta)}, such that $v([0,1])\subseteq T^*B(R_1)$.
\end{thm}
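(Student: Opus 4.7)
The plan is to reduce Theorem~\ref{thm:Chords3BD} directly to Theorem~\ref{thm:ExistenceOfChords} by verifying that, with the constants $R_1$ and $a$ specified in \eqref{Cond3BD}, the three-body potential \eqref{3BDPotential} satisfies both conditions of \eqref{DefV} on the exterior region $\{r \geq R_1\}$. The assumption $\mu \in (0,1/2]$ gives $1-\mu \geq 1/2$, hence $R_1 \geq 2(1-\mu) \geq 1$, so the explicit formula \eqref{3BDPotential} is valid throughout the region in question.

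For the pointwise bound $V(r,\theta) \leq a/r$, I would start from the crude estimates $\sqrt{r^2 + 2r\mu\cos\theta + \mu^2} \geq r-\mu$ and $\sqrt{r^2 - 2r(1-\mu)\cos\theta + (1-\mu)^2} \geq r - (1-\mu)$, which combine to
$$
r\,V(r,\theta) \leq \frac{(1-\mu)r}{r-\mu} + \frac{\mu r}{r-(1-\mu)}.
$$
Each summand on the right is strictly decreasing in $r$, so its supremum on $\{r \geq R_1\}$ is attained at $r = R_1$, and by construction this supremum equals exactly the $a$ specified in \eqref{Cond3BD}.

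For the differential inequality $\partial_r V + \tfrac{2}{r}V \geq 0$, I would rewrite it as $\partial_r(r^2 V) \geq 0$ and treat each Newtonian summand separately. For a point mass at distance $d$ from the origin, a short direct computation shows that $\partial_r\bigl(r^2 / \sqrt{r^2 - 2rd\cos\alpha + d^2}\bigr)$ has the sign of $r^2 - 3rd\cos\alpha + 2d^2$; minimizing over the angular variable $\alpha$ gives $(r-d)(r-2d) \geq 0$ whenever $r \geq 2d$. Since the two primaries sit at distances $\mu$ and $1-\mu$ from the origin, the single condition $r \geq R_1 \geq 2(1-\mu)$ simultaneously controls both Newtonian terms.

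With both conditions of \eqref{DefV} verified and $|q_0|,|q_1| \leq R_1$ by construction, Theorem~\ref{thm:ExistenceOfChords} applied at an energy $c$ satisfying \eqref{CondC} produces the desired chord $(v,\eta)$ staying in $T^*B(R_1)$. I expect no substantial obstacle here: all of the analytic work is already packaged into Theorem~\ref{thm:ExistenceOfChords}, and what remains is geometric bookkeeping — the value $R_1 = \max\{2(1-\mu),|q_0|,|q_1|\}$ is dictated by the monotonicity requirement together with the endpoint constraint, while the value of $a$ is pinned down as the supremum of $rV$ on the exterior region.
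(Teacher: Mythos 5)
Your proposal is correct and follows essentially the same route as the paper: bound $rV$ by $\frac{(1-\mu)r}{r-\mu}+\frac{\mu r}{r-(1-\mu)}$, use monotonicity in $r$ to identify the supremum at $r=R_1$ with the $a$ of \eqref{Cond3BD}, and verify $\partial_r V+\tfrac{2}{r}V\ge 0$ term by term, where your sign criterion $r^2-3rd\cos\alpha+2d^2\ge (r-d)(r-2d)\ge 0$ for $r\ge 2d$ is exactly the paper's computation. The reformulation as $\partial_r(r^2V)\ge 0$ is a cosmetic repackaging, not a different argument.
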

Naturally, this is not exactly the Hamiltonian corresponding to the restricted planar circular three body problem, as it is smooth on $T^*\R^2$ and does not take into account the singularities at the collisions, but as it is the same outside a compact set, it gives us an intuition on how the Hamiltonian flow behaves far away from the collision points. Our ultimate goal is to prove the two boost-problem for the regularized positive level sets of the restricted planar circular three body problem and we plan it to be the subject of our future work.


\section*{Acknowledgments}
I would like to thank prof. Urs Frauenfelder for all time devoted to mentoring me and all the fruitful discussions we had on the subject. I would also like to thank prof. Frauenfelder, prof. Cieliebak and all the other members of the symplectic geometry group for the warm and friendly welcome at the University of Augsburg.

My postdoctoral position at University of Augsburg is funded by the Deutsche Forschungsgemeinschaft (DFG, German Research Foundation) via the grant ``Himmelsmechanik, Hydrodynamik und Turing-Maschinen" - 541525489.

\section{Lemmas and proofs}
\subsection{Energy and angular momentum}\label{ssec:E&P}

In this subsection we will analyse the properties of Hamiltonians defined in \eqref{DefH}. More precisely, our aim is to show that for energy values high enough, the Hamiltonian flow on the energy hypersurface exhibits hyperbolic behaviour, i.e. the trajectories exiting the compact set, can not return back to it. One can understand it on the example of the Kepler problem: Recall, that in the setting of the inertial Kepler problem the Hamiltonian flow has three type of trajectories: elliptic for negative energy values, parabolic on the\linebreak $0$-energy level set and hyperbolic for positive energy. In the setting of the rotating Kepler problem the dynamics changes and on some energy levels there exist all three types of trajectories. The trajectory passing through a point $(r, \theta, p_r, p_\theta)$ is hyperbolic if and only if $H(r, \theta, p_r, p_\theta)-p_\theta>0$. However, for every $r>0$ there exists energy high enough such that for all $(r, \theta, p_r, p_\theta)\in H^{-1}(c)\cap T^*B(r)$ we have $p_\theta<c$. Consequently, all the trajectories passing through $H^{-1}(c)\cap T^*B(r)$ are hyperbolic: if they leave $H^{-1}(c)\cap T^*B(r)$, they will never enter it again.

In this subsection we generalize this analysis to the setting of Hamiltonians defined in \eqref{DefH}.
We will show that the if additionally the potential function is rotationally invariant, then for high energies the behaviour of the Hamiltonian flow is hyperbolic. As a consequence for high enough energies all the solutions to \eqref{Def(v,eta)} are confined to a compact set, as stated in the following proposition:

\begin{prop}\label{prop:RotBound}
Consider a Hamiltonian $H:= H_0-V$ with $H_0$ as in \eqref{DefH0} and the potential function $V$ as in \eqref{DefV}, with the additional assumption that for $r\geq R_1$, we have $\partial_\theta V(r,\theta)=0$. Then for every energy value $c>\sqrt{2aR_1^2}$ and every pair $q_0,q_1 \in \R^2$, such that $|q_0|,|q_1|\leq R_1$, all the solutions $(v,\eta)\in W^{1,2}([0,1], T^*\R^2)\times\R$ to \eqref{Def(v,eta)} satisfy $v([0,1])\subseteq T^*B(R_1)$.
\end{prop}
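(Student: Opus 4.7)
The plan is to argue by contradiction: suppose a solution $(v,\eta)$ with $v = (q,p)$ leaves $T^*B(R_1)$, so that $r(t) := |q(t)|$ exceeds $R_1$ on a nonempty open subset of $[0,1]$. Since $r(0), r(1) \leq R_1$ and $r$ is continuous, there exist $t_1 < t_2$ in $[0,1]$ with $r(t_1) = r(t_2) = R_1$ and $r(t) > R_1$ on $(t_1, t_2)$. The core idea is to show that $t \mapsto r(t)^2/2$ is strictly convex on $[t_1, t_2]$; since this function takes the same value $R_1^2/2$ at both endpoints, strict convexity forces $r(t)^2/2 < R_1^2/2$ on the open interior, contradicting $r > R_1$ there. (The degenerate case $\eta = 0$ forces $v$ to be constant with $q_0 = q_1$, hence trivially inside $T^*B(R_1)$, so we may assume $\eta \neq 0$.)

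On $[t_1, t_2]$ the chord remains in $\{r \geq R_1\}$, where by hypothesis $V$ depends only on $r$. Consequently $\dot p_\theta = \eta\,\partial_\theta V = 0$ and the angular momentum $\ell := p_\theta$ is a constant of motion on this interval. Writing Hamilton's equations for $H = H_0 - V$ in polar coordinates together with $\partial_t v = \eta X_H(v)$, a direct calculation yields
\[
\frac{d^2}{dt^2}\!\left(\frac{r^2}{2}\right) = \eta^2\!\left(p_r^2 + \frac{\ell^2}{r^2} + r\,\partial_r V\right).
\]
Substituting $p_r^2 = 2c + 2V - 2\ell - \ell^2/r^2$ from the energy relation $H(v) = c$ and invoking the structural assumption $\partial_r V + (2/r)V \geq 0$ from \eqref{DefV} gives
\[
\frac{d^2}{dt^2}\!\left(\frac{r^2}{2}\right) = \eta^2\bigl(2(c-\ell) + 2V + r\,\partial_r V\bigr) \geq 2\eta^2(c - \ell).
\]
Thus strict convexity reduces to the single inequality $c > \ell$.

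The remaining step is to bound $\ell$ using the energy hypothesis. Evaluating the conserved energy at $t = t_1$, where $r = R_1$ and $V(R_1) \leq a/R_1$ by continuity of $V$, gives $c \geq \ell^2/(2R_1^2) + \ell - a/R_1$, or equivalently $(\ell + R_1^2)^2 \leq R_1^4 + 2cR_1^2 + 2aR_1$. Comparing with $(c + R_1^2)^2 = c^2 + 2cR_1^2 + R_1^4$ shows that $\ell < c$ is equivalent to $c^2 > 2aR_1$, which is exactly the content of the energy hypothesis (in the form $c > \sqrt{2aR_1}$ matching Theorem~\ref{thm:ExistenceOfChords}(2)). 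With $c > \ell$ in hand, strict convexity of $r^2/2$ on $[t_1,t_2]$ follows and the contradiction is complete. The main obstacle is precisely this last bound: one must handle carefully that $\ell$ can a priori be positive or negative, and verify the square-root comparison in the right sign. The hypothesis $\partial_r V + (2/r)V \geq 0$ is tailored so that $2V + r\,\partial_r V \geq 0$, absorbing the potential cleanly into a nonnegative contribution to the second derivative of $r^2/2$ and making the convexity mechanism go through.
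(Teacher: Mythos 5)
Your argument is correct and follows essentially the same route as the paper's proof: conservation of $p_\theta$ outside $B(R_1)$, the energy relation at $r=R_1$ to force $p_\theta<c$ (the paper's Lemma~\ref{lem:energy}), and the condition $\partial_r V+\tfrac{2}{r}V\geq 0$ to make the radial acceleration positive (the paper's Lemma~\ref{lem:empty}) --- your only variation is running the maximum principle on $r^2/2$ over the whole excursion interval rather than applying the second-derivative test at an interior maximum of $r$, which neatly sidesteps the need for $p_r=0$. Note that your derivation requires $c>\sqrt{2aR_1}$ rather than the stated $c>\sqrt{2aR_1^2}$, but the paper's own proof invokes the same bound (``by assumption $c>\sqrt{2aR_1}$''), so this is a discrepancy in the statement of the proposition, not a gap in your argument.
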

Before we prove the proposition, we will start with proving two lemmas, which apply to all Hamiltonians satisfying \eqref{DefH}, not necessarily rotationally invariant outside a compact set.

In the first lemma we will show that for all $r>R_1$ we can always choose an energy value $c$ high enough that for all $(r, \theta, p_r, p_\theta)\in H^{-1}(c)\cap T^*(B(r)\setminus B(R_1))$ we have $p_\theta<c$. 

\begin{lem}\label{lem:energy}
Let $H$ be the Hamiltonian as in \eqref{DefH} and choose an energy value $c> \sqrt{2aR_1^2} $. Then for all $(r,\theta, p_r, p_\theta)\in H^{-1}(c)$, such that $r\in [R_1, \frac{c^2}{2a})$, we have
$$
c-p_\theta>\frac{c^2-2ar}{2(c+r^2)}.
$$
\end{lem}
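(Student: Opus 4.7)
The plan is to treat the Hamiltonian constraint $H(r,\theta,p_r,p_\theta)=c$ as a quadratic equation in $p_\theta$, isolate $p_\theta$ via the quadratic formula, and then use the pointwise bound $V\leq a/r$ from \eqref{DefV} together with the numerical hypothesis $r<c^2/(2a)$ to produce the stated inequality. Everything reduces to a single algebraic manipulation of the form ``rationalise the numerator'' once the Hamiltonian has been rewritten appropriately.

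\textbf{Step 1: Complete the square in $p_\theta$.} Writing out $H=H_0-V=c$ using \eqref{DefH0} gives $\tfrac{p_\theta^2}{2r^2}+p_\theta=c+V-\tfrac{p_r^2}{2}$. Multiplying by $2r^2$ and completing the square yields
\[
(p_\theta+r^2)^2 \;=\; r^2\bigl(r^2+2c+2V-p_r^2\bigr).
\]
Taking the positive square root (the relevant branch for an upper bound on $p_\theta$) and using $V(r,\theta)\leq a/r$ on $r\geq R_1$ (with the equality case at $r=R_1$ following by continuity) together with $p_r^2\geq 0$ gives
\[
p_\theta \;\leq\; -r^2+r\sqrt{r^2+2c+2a/r}.
\]

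\textbf{Step 2: Rationalise the resulting expression for $c-p_\theta$.} This produces
\[
c-p_\theta \;\geq\; c+r^2-r\sqrt{r^2+2c+2a/r} \;=\; \frac{(c+r^2)^2-r^2\bigl(r^2+2c+2a/r\bigr)}{c+r^2+r\sqrt{r^2+2c+2a/r}}.
\]
Expanding the numerator, the $r^4$ and $2cr^2$ terms cancel cleanly, leaving exactly $c^2-2ar$. So
\[
c-p_\theta \;\geq\; \frac{c^2-2ar}{c+r^2+r\sqrt{r^2+2c+2a/r}}.
\]

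\textbf{Step 3: Compare denominators using $r<c^2/(2a)$.} The hypothesis $r<c^2/(2a)$ makes the numerator $c^2-2ar$ strictly positive, so it suffices to show that the denominator is strictly less than $2(c+r^2)$. Equivalently, $r\sqrt{r^2+2c+2a/r}<c+r^2$, which after squaring both (positive) sides collapses to $2ar<c^2$, i.e.\ precisely $r<c^2/(2a)$. This yields the strict inequality
\[
c-p_\theta \;>\; \frac{c^2-2ar}{2(c+r^2)},
\]
as required. The hypothesis $c>\sqrt{2aR_1^{\,2}}$ simply ensures that the interval $[R_1,c^2/(2a))$ is nonempty.

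I do not expect a real obstacle here: the only delicate point is justifying which branch of the quadratic one works with (the $+$ branch, since we want the largest possible $p_\theta$ and hence the smallest possible $c-p_\theta$), and checking that replacing $V$ by its upper bound $a/r$ weakens the inequality in the desired direction (increasing the radicand increases the upper bound on $p_\theta$, hence lowers the bound on $c-p_\theta$). Once those sign considerations are recorded, the rest is a short, self-contained computation.
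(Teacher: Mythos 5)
Your proof is correct and follows essentially the same route as the paper's: the paper sets $e=c-p_\theta$ and solves the resulting quadratic inequality $e\geq\frac{(c-e)^2}{2r^2}-\frac{a}{r}$ for $e$, which after rationalising the lower root and using $2ar<c^2$ gives exactly your chain of estimates. Your version merely completes the square in $p_\theta$ instead of in $e$, so the two arguments coincide up to this change of variable.
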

\begin{proof}
Denote $e:=H(r,\theta, p_r, p_\theta)-p_\theta$. Since $r\geq R_1$ then by \eqref{DefV}, we have
$$
e \geq \frac{p_\theta^2}{2 r^2}-V(r,\theta)\geq \frac{p_\theta^2}{2 r^2}-\frac{a}{r}= \frac{(c-e)^2}{2 r^2}-\frac{a}{r}=\frac{1}{2r^2}\left(e^2-2ce+c^2-2ar \right).
$$
The inequality above is satisfied if and only if
$$
e \in \left(c+r^2- \sqrt{(c+r^2)^2+2ar-c^2}, c+r^2+ \sqrt{(c+r^2)^2+2ar-c^2}\right).
$$
Thus if $r<\frac{c^2}{2a}$, then $e> 0$. We can calculate
$$
e > c+r^2- \sqrt{(c+r^2)^2+2ar-c^2} \geq \frac{c^2-2ar}{2(c+r^2)}.
$$

Note that for $r<\frac{c^2}{2a}$ we have
$$
\frac{d}{dr}\left(c+r^2- \sqrt{(c+r^2)^2+2ar-c^2}\right)=\frac{-2r\left(c+r^2+a-\sqrt{(c+r^2)^2+2ar-c^2} \right)}{\sqrt{(c+r^2)^2+2ar-c^2}}<0.
$$
Thus if $R_2\in \left(R_1,\frac{c^2}{2a}\right)$, then for all $(r,\theta, p_r, p_\theta)\in H^{-1}(c)$, such that $r\in [R_1, R_2]$, we have
$$
c-p_\theta\geq \frac{c^2-2aR_2}{2(c+R_2^2)}.
$$
\end{proof}

The following lemma assures that if $(v,\eta)$ is a solution to \eqref{Def(v,eta)} for Hamiltonians as in \eqref{DefH}, then the function $r\circ v$ cannot obtain its maximum in the set\linebreak $T^*(\R^2\setminus B(R_1))\cap \{H-p_\theta>0\}$ unless $v$ starts or ends there.

\begin{lem}\label{lem:empty}
For a Hamiltonian $H$ as in \eqref{DefH} we have:
$$
\{\{H,r\}=0\}\cap \{\{H,\{H,r\}\}\leq 0\}\cap \left\lbrace H-p_\theta > 0\right\rbrace\cap T^*(\R^2\setminus B(R_1))=\emptyset.
$$
\end{lem}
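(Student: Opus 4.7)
The plan is to compute the two Poisson brackets explicitly in the polar canonical coordinates $(r,\theta,p_r,p_\theta)$ and then show that the intersection of the three inequalities forces a violation of the second condition in \eqref{DefV}.

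First I would compute $\{H,r\}$ and $\{H,\{H,r\}\}$ for $H=\tfrac12 p_r^2+\tfrac{p_\theta^2}{2r^2}+p_\theta-V(r,\theta)$. A direct calculation gives $\{H,r\}=p_r$, and differentiating along $X_H$ once more yields
$$
\{H,\{H,r\}\}=\{H,p_r\}=-\partial_r H=\frac{p_\theta^2}{r^3}+\partial_r V(r,\theta).
$$
These are the two geometric objects that carry the assumptions: $\{H,r\}=0$ amounts to $p_r=0$ (so $r$ is at a critical point along the Hamiltonian flow), and $\{H,\{H,r\}\}\le 0$ amounts to
$$
\frac{p_\theta^2}{r^3}+\partial_r V(r,\theta)\le 0.
$$

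Next I would translate the energy condition $H-p_\theta>0$ using $p_r=0$: it reduces to
$$
\frac{p_\theta^2}{2r^2}-V(r,\theta)>0,\qquad\text{i.e.}\qquad p_\theta^2>2r^2V(r,\theta).
$$
Inserting this lower bound for $p_\theta^2$ into the inequality $\frac{p_\theta^2}{r^3}+\partial_r V\le 0$ produces
$$
\partial_r V(r,\theta)\le -\frac{p_\theta^2}{r^3}<-\frac{2V(r,\theta)}{r},
$$
hence $\partial_r V+\tfrac{2}{r}V<0$. Since the point is assumed to lie in $T^*(\R^2\setminus B(R_1))$, we have $r>R_1$, and assumption \eqref{DefV} forces $\partial_r V+\tfrac{2}{r}V\ge 0$ there, which is a direct contradiction.

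I do not expect any real obstacle: the lemma is essentially the algebraic incarnation of the geometric picture sketched in Subsection \ref{ssec:E&P}, and the only substantive input beyond the two bracket computations is the growth condition in \eqref{DefV}. The single subtlety worth checking is the sign convention in the Poisson bracket and the sign of $V$ (the Hamiltonian is $H_0-V$ with $V\ge 0$), both of which feed into step 2 above; once these are fixed, the three inequalities are incompatible almost by inspection.
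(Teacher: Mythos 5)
Your proof is correct and follows essentially the same route as the paper: both rest on the computation $\{H,r\}=p_r$, $\{H,\{H,r\}\}=\frac{p_\theta^2}{r^3}+\partial_r V$, and on the identity $\frac{p_\theta^2}{r^3}+\partial_r V=\frac{2}{r}\left(H-p_\theta\right)+\left(\frac{2}{r}V+\partial_r V\right)$ valid when $p_r=0$. The paper phrases it as showing the second bracket is strictly positive on the set in question, whereas you run the same inequality backwards to contradict \eqref{DefV}; this is only a cosmetic difference.
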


\begin{proof}
Let $x=(r,\theta, p_r, p_\theta) \in T^*(\R^2\setminus B(R_1))$. By \eqref{defXH1} we have
$$
\{H,r\}(x) = p_r, \qquad \textrm{and}\qquad \{H, \{H, r\}\}(x)=\frac{p_\theta^2}{r^3}+\partial_rV.
$$
Thus if $x \in T^*(\R^2\setminus B(R_1)\}\cap \{\{H,r\}=0\}$, then $p_r=0$.

Using \eqref{DefV} we can calculate
\begin{align*}
\{H, \{H, r\}\}(x) & =\frac{p_\theta^2}{r^3}+\partial_rV  = \frac{2}{r}\left( \frac{p_\theta^2}{2r^2}- V\right)+\left( \frac{2}{r}V+ \partial_rV\right)\\
& = \frac{2}{r}(H(x)-p_\theta)+\left( \frac{2}{r}V+ \partial_rV\right) \geq \frac{2}{r}(H(x)-p_\theta)>0.
\end{align*}
\end{proof}

Now we are ready to prove Proposition \ref{prop:RotBound} and show that for Hamiltonians as in \eqref{DefH} the corresponding solutions to \eqref{Def(v,eta)} are contained in a compact set, provided the Hamiltonian function is rotationally invariant and the energy is high enough. The main reason the following proof works for rotationally invariant potential functions, but not for an arbitrary potential function as in \eqref{DefV}, is that the flow of a rotationally invariant Hamiltonian preserves the angular momentum, whereas an arbitrary Hamiltonian of the form \eqref{DefH} may not.

\vspace*{.5cm}
\noindent \textit{Proof of Proposition \ref{prop:RotBound}:}

We will show all the solutions $(v,\eta)\in W^{1,2}([0,1], T^*Q)\times\R$ to \eqref{Def(v,eta)} satisfy $v([0,1])\subseteq T^*B(R_1)$, where $B(R_1)$ is a ball of radius $R_1$. 

We will argue by contradiction. Let $(v,\eta)\in W^{1,2}([0,1], T^*Q)\times\R$ to \eqref{Def(v,eta)} be a solution to \eqref{Def(v,eta)}. Denote $v(t)=(r(t), \theta(t), p_r(t), p_\theta(t))$ and suppose there exists $t_1 \in (0,1)$, such that $r(t_1)=\max_{t\in [0,1]}r(t)>R_1$. Then
\begin{equation}
\begin{aligned}
\frac{d}{dt} r(t)& =\eta \{H, r\}\circ v(t_1) = 0,\\ 
\frac{d^2}{dt^2}r(t) & =\eta^2\{H, \{H, r\}\}\circ v(t_1) \leq 0.
\end{aligned}
\end{equation}
In other words, 
$$
v(t_1)\in H^{-1}(c)\cap\{\{H,r\}=0\}\cap \{\{H,\{H,r\}\}\leq 0\}\cap T^*(\R^2\setminus B(R_1)).
$$

On the other hand, for $t_0:=\sup\{t \in [0, t_1]\ |\ r(t)\leq R_1\}$ we have $r(t_0)=R_1$. 
Since for all $t\in (t_0,t_1)$ we have $v(t)\in T^*(\R^2\setminus B(R_1))$, consequently, by our assumption on the potential $V$, we obtain 
$$
\frac{d}{dt}p_\theta(t)=\{H, p_\theta\}\circ v(t)= \partial_\theta H \circ v(t)= -\partial_\theta V \circ v(t)=0.
$$
By assumption $c>\sqrt{2aR_1}$ and Lemma \ref{lem:energy} we obtain
$$
H\circ v(t_1)-p_\theta(t_1)= H\circ v(t_0)-p_\theta(t_0) \geq \frac{c^2-2aR_1}{2(c+R_1^2)}>0.
$$
Consequently,
$$
v(t_1)\in \{\{H,r\}=0\}\cap \{\{H,\{H,r\}\}\leq 0\}\cap \left\lbrace H-p_\theta >0 \right\rbrace\cap T^*(\R^2\setminus B(R_1)).
$$
But from Lemma \ref{lem:empty}, we know that it is an empty set, which brings us a contradiction.

\hfill $\square$

\subsection{Compact perturbation}
The aim of this subsection is to prove Proposition \ref{prop:chords}.
We will first show how we construct the compactly supported perturbations from set $\mathcal{H}$, which agree with the potential function $V$ as in \eqref{DefV} on the set\linebreak $H^{-1}(c)\cap T^*B(R_1)$. 
Then, we show that for each chosen perturbation $h\in \mathcal{H}$, the solutions $(v,\eta)$ of \eqref{Def(v,eta)} of the corresponding Hamiltonian $H_0-h$ satisfy\linebreak $v([0,1])\subseteq H^{-1}(c)\cap T^*B(R_1)$.

Let $\chi:\R \to [0,1]$ be a smooth, not increasing function on $\R$, such that\linebreak $\inf \chi' \geq -2$ and 
\begin{equation}\label{DefChi}
\chi(x):=\begin{cases}
1 & \textrm{for}\quad x\leq 0,\\
0 & \textrm{for}\quad x\geq 1.
\end{cases}
\end{equation}

Let $H_0$ be the Hamiltonian defined in \eqref{DefH0} and $V$ the potential function as in \eqref{DefH}. For $c>0$ and $R_2>R_1$ we define the following smooth functions on $T^*\R^2$:
\begin{align}
\chi_0(r) & :=\chi \left(\frac{r-R_1}{R_2-R_1}\right), \label{DefChi0}\\
\chi_1(q,p) & := \chi(H_0(q,p)-\sup V-c),\label{DefChi1}\\
H_1 &:= H_0 - \chi_0\chi_1 V.\label{DefH1}
\end{align}

\begin{rem}\label{rem:relHvsH1}
Note that the three sets $H_1^{-1}(c), H^{-1}(c)$ and $H_0^{-1}(c)$ are subsets of $\chi_1^{-1}(1)$.
\end{rem}

Note that since $H_1^{-1}(c)\subseteq \chi_1^{-1}(1)$, the Hamiltonian vector field of $H_1$ on the set\linebreak $H_1^{-1}(c)\cap T^*(\R^2\setminus B(R_1))$ has the form:
\begin{equation}\label{defXH1}
X_{H_1}=p_r\partial_r+\left(\frac{p_\theta}{r^2}+1\right)\partial_\theta+\left(\frac{p_\theta^2}{r^3}+\chi_0\partial_rV+\chi_0' V\right)\partial_{p_r}+\chi_0 \partial_\theta V \partial_{p_\theta}.
\end{equation}

\begin{rem}\label{rem:equality}
For every $c>0$ and every $R_2>R_1$ the corresponding Hamiltonian $H_1$ defined in \eqref{DefH1} satisfies
\begin{align*}
T^*B(R_1)\cap H_1^{-1}(c)& =T^*B(R_1)\cap H^{-1}(c),\\
 H_1\big|_{T^*B(R_1)\cap H_1^{-1}(c)}& =H\big|_{T^*B(R_1)\cap H^{-1}(c)}.
\end{align*}
Consequently, for all $q_0,q_1\in B(R_1)$ we have
$$
\left\lbrace (v,\eta) \in \Crit \A_{q_0,q_1}^{H_1-c} \ \big|\ v([0,1])\subseteq T^*B(R_1) \right\rbrace \subseteq \Crit \A_{q_0,q_1}^{H-c}.
$$
\end{rem}
In the following Lemma we will show that Hamiltonians $H_1$ as defined in \eqref{DefH1} are of the form $H_0-h$ for some $h \in \mathcal{H}$, where $\mathcal{H}$ is the set defined in \eqref{DefHset}:
\begin{lem}
Consider functions $V$, $\chi_0$ and $\chi_1$ as in \eqref{DefH}, \eqref{DefChi0} and \eqref{DefChi1}, respectively. Then for every $c>0$ we have $\chi_0\chi_1 V+c
\in \mathcal{H}$, where $\mathcal{H}$ is the set of Hamiltonians defined in \eqref{DefHset}.
\end{lem}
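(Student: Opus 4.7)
The plan is to verify the four defining conditions of $\mathcal{H}$ in turn for the candidate $h := \chi_0\chi_1 V + c$. Smoothness is immediate because $\chi$, $V$ and $H_0$ are smooth, and positivity $h > 0$ is immediate as well: $\chi_0,\chi_1\in[0,1]$ and $V\geq 0$ by \eqref{DefV}, so $h \geq c > 0$.

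For the compact support of $dh = d(\chi_0\chi_1 V)$, I would note that $\supp dh \subseteq \supp(\chi_0\chi_1 V)$. On this set, $\chi_0 \neq 0$ forces $|q| \leq R_2$ by \eqref{DefChi0} and \eqref{DefChi}, while $\chi_1 \neq 0$ forces $H_0 \leq \sup V + c + 1$ by \eqref{DefChi1}. Using the explicit form $H_0 = \tfrac{1}{2}|p|^2 + p_1 q_2 - p_2 q_1$ together with the Cauchy--Schwarz estimate $|p_1 q_2 - p_2 q_1|\leq R_2|p|$, the bound on $H_0$ turns into a quadratic inequality in $|p|$ that forces $|p|$ to be bounded, so the support is compact in $T^*\R^2$.

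The crux of the lemma is the twisted positivity $h - dh(p\partial_p) > 0$. Because $\chi_0$ and $V$ depend only on $q$, the $p$-derivatives hit only $\chi_1$, giving
\begin{equation*}
dh(p\partial_p) \;=\; \chi_0\, V \cdot \chi'(H_0 - \sup V - c) \cdot dH_0(p\partial_p).
\end{equation*}
A direct calculation from the explicit form of $H_0$ in \eqref{DefH0} yields
\begin{equation*}
dH_0(p\partial_p) \;=\; p_1(p_1+q_2) + p_2(p_2-q_1) \;=\; H_0 + \tfrac{1}{2}|p|^2.
\end{equation*}
On $\supp\chi'(H_0-\sup V-c)$ one has $H_0 > \sup V + c \geq c > 0$, so $dH_0(p\partial_p) > 0$ there. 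Combined with $\chi_0 V \geq 0$ and $\chi' \leq 0$, this forces $dh(p\partial_p) \leq 0$ pointwise, and hence $h - dh(p\partial_p) \geq h \geq c > 0$.

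I expect the only real subtlety to lie in this last sign analysis: the proof hinges on combining the non-positivity of $\chi'$ with the strict positivity of $dH_0(p\partial_p)$ on the annular region where $\chi'$ is non-zero, and the latter in turn requires the hypothesis $c>0$ together with $V\geq 0$. Apart from this, everything is a routine verification of the items in the definition of $\mathcal{H}$.
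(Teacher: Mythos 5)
Your proof is correct and follows essentially the same route as the paper: the same factorization $dh(p\partial_p)=\chi_0 V\,\chi'(H_0-\sup V-c)\,dH_0(p\partial_p)$ with $dH_0(p\partial_p)=H_0+\tfrac12|p|^2$, the same sign analysis on $\supp\chi'$, and the same compactness argument (you bound $|p|$ via Cauchy--Schwarz in Cartesian coordinates where the paper completes the square in polar coordinates, but this is the same verification).
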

\begin{proof}
First let us heck that $\chi_0\chi_1 V$ has compact support. By Remark \ref{rem:relHvsH1} we have
\begin{align*}
\supp \left( \chi_0\chi_1 V\right) & \subseteq \supp \chi_0 \cap \sup \chi_1= T^*B(R_2)\cap H_0^{-1}((-\infty, \sup V +c +1])\\
& = T^*B(R_2)\cap\left\lbrace \frac{1}{2}p_r^2+\frac{1}{2}\left(\frac{p_\theta}{r}+r\right)^2\leq \frac{1}{2}r^2+\sup V +c +1 \right\rbrace\\
& \subseteq \left\lbrace 
\begin{array}{c | c}
 & |p_r|\leq \sqrt{R_2^2+2(\sup V +c +1)}\\
\hspace*{-0.2cm}\smash{\raisebox{.5\normalbaselineskip}{$(r,\theta, p_r, p_\theta) \in T^*B(R_2)$}} & 
|p_\theta|\leq R_2 \left( R_2+ \sqrt{R_2^2+2(\sup V +c +1)}\right)
\end{array}\right\rbrace.
\end{align*}
Compactness of the set on the right-hand proves the first claim.

Further more we can calculate: 
\begin{align*}
d(\chi_0\chi_1 V)(p\partial_p) & = \chi_0 V d\chi_1(p\partial_p) = \chi_0 V \chi'(H_0-\sup V -c)dH_0(p\partial_p)\\
& = \chi_0 V \chi'(H_0-\sup V -c)\left(\frac{1}{2}|p_r|^2+\frac{p_\theta^2}{2r^2}+
H_0\right).\\
\supp \chi_1' & =\overline{\chi_1^{-1}((0,1))}=H_0^{-1}([\sup V +c, \sup V+c+1]),\\
d(\chi_0\chi_1 V)(p\partial_p) & \leq \chi_0 V \chi'(H_0-\sup V -c)\left(\frac{1}{2}|p_r|^2+\frac{p_\theta^2}{2r^2}+\sup V + c\right)\leq 0,
\end{align*}
where the last inequality uses the assumption that $V, \chi_0 \geq 0$ and $\chi'\leq 0$. Consequently, 
$$
\chi_0\chi_1 V+c-d(\chi_0\chi_1 V)(p\partial_p) \geq \chi_0\chi_1 V+c\geq c>0.
$$
\end{proof}

Now we will start analysing dynamical properties of Hamiltonians $H_1$ as in \eqref{DefH1}. More precisely, we will show that if the chords $(v,\eta)$ solving \eqref{Def(v,eta)} start\linebreak and end in $T^*B(R_1)$, then the function $r\circ v$ cannot obtain its maximum\linebreak in $H_1^{-1}(c)\cap T^*(B(R_2)\setminus B(R_1))= H_1^{-1}(c) \cap \chi_0^{-1}((0,1))$. Similarly, as in the setting of Lemma \ref{lem:empty}, to prove this property we will need to assume that $H_1-p_\theta$ is positive.

\begin{lem}\label{lem:emptySet}
Consider a potential function $V$ as in \eqref{DefV}. Choose two positive constants $c,e>0$ and set $R_2:=R_1+2\frac{a}{e}$. Then for the corresponding Hamiltonian $H_1$ defined as in \eqref{DefH1} we have
$$
\{\{H_1,r\}=0\}\cap \{\{H_1,\{H_1,r\}\}\leq 0\}\cap \left\lbrace H_1-p_\theta \geq e\right\rbrace\cap T^*(\R^2\setminus B(R_1))=\emptyset.
$$
\end{lem}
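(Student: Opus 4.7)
The plan is to follow the strategy of Lemma \ref{lem:empty}, with the cut-off functions $\chi_0(r)$ and $\chi_1(H_0)$ treated as perturbations whose effect will be absorbed by the specific choice $R_2 - R_1 = 2a/e$. First I would show that $\{H_1, r\} = 0$ still forces $p_r = 0$. Since $\chi_1$ depends on $p_r$ through $H_0$, a short computation gives $\{H_1, r\} = \partial_{p_r} H_1 = p_r \cdot G$, where
$$
G \;:=\; 1 - \chi_0\, V\, \chi'(H_0 - \sup V - c).
$$
The assumption $\chi' \leq 0$ from \eqref{DefChi} together with $\chi_0, V \geq 0$ yields $G \geq 1 > 0$, so $p_r = 0$ at every point of the set in question.

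Next, at $p_r = 0$ I would compute the iterated Poisson bracket. Using $\partial_r H_0 = -p_\theta^2/r^3$ and the chain rule on $\chi_1(H_0)$, a direct calculation reduces it to
$$
\{H_1, \{H_1, r\}\} \;=\; G\cdot\left(\frac{p_\theta^2}{r^3}\,G + \chi_0'\chi_1 V + \chi_0\chi_1\,\partial_r V\right).
$$
Now the identity $p_\theta^2/(2r^2) = H_1 - p_\theta + \chi_0\chi_1 V$ (valid at $p_r = 0$) together with the hypothesis $H_1 - p_\theta \geq e$, regrouped exactly as in the proof of Lemma \ref{lem:empty}, rewrites the bracketed expression as
$$
\frac{2}{r}(H_1 - p_\theta) \;+\; \chi_0\chi_1\left(\frac{2V}{r} + \partial_r V\right) \;+\; \chi_0'\chi_1 V \;+\; \frac{p_\theta^2}{r^3}(G - 1).
$$
The first summand is at least $2e/r$ by hypothesis, the second is non-negative by \eqref{DefV}, and the fourth is non-negative because $G \geq 1$.

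The remaining task is to dominate the one potentially negative term, $\chi_0'\chi_1 V$. Using $\inf\chi' \geq -2$ I would bound $\chi_0'(r) \geq -2/(R_2 - R_1)$; combined with $V \leq a/r$ for $r > R_1$ and $\chi_1 \leq 1$ this gives
$$
\chi_0'\chi_1 V \;\geq\; -\frac{2a}{r(R_2 - R_1)} \;=\; -\frac{e}{r},
$$
where the final equality is precisely where the choice $R_2 = R_1 + 2a/e$ pays off. Summing the four contributions yields $\{H_1, \{H_1, r\}\} \geq G\cdot e/r > 0$, contradicting $\{H_1, \{H_1, r\}\} \leq 0$. The main obstacle I anticipate is the bookkeeping for the extra $G$-factor coming from $\chi_1(H_0)$; fortunately $G \geq 1$ sends every additional contribution in the favourable direction, so the argument goes through without any a priori restriction to the level set $H_1^{-1}(c)\subseteq\chi_1^{-1}(1)$.
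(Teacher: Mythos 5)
Your proof is correct and follows the same core strategy as the paper's: deduce $p_r=0$ from $\{H_1,r\}=0$, decompose the iterated bracket as $\frac{2}{r}(H_1-p_\theta)+\chi_0\left(\frac{2}{r}V+\partial_r V\right)+(\text{cut-off error})$, and absorb the error term via $\chi_0'V\geq -\frac{2a}{r(R_2-R_1)}=-\frac{e}{r}$ using the choice $R_2-R_1=2\frac{a}{e}$, leaving a margin of $\frac{e}{r}>0$. The one genuine difference is your treatment of $\chi_1$: the paper invokes \eqref{defXH1}, which is only valid on $H_1^{-1}(c)$ (where $\chi_1\equiv 1$ and $d\chi_1=0$, cf.\ Remark \ref{rem:relHvsH1}), and its computation of $\{H_1,\{H_1,r\}\}$ is explicitly carried out for $x\in H_1^{-1}(c)\cap\cdots$; you instead carry the factor $G=1-\chi_0 V\chi'(H_0-\sup V-c)\geq 1$ through the whole computation and observe that both extra contributions (the overall positive factor $G$ and the term $\frac{p_\theta^2}{r^3}(G-1)\geq 0$) point in the favourable direction. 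This buys you the statement of Lemma \ref{lem:emptySet} exactly as written, i.e.\ without intersecting with the energy level set; the paper's proof, read literally, establishes only the version with $H_1^{-1}(c)$ added to the intersection, which is weaker but sufficient for the application in Proposition \ref{prop:chords}. Your identity $\frac{p_\theta^2}{2r^2}=H_1-p_\theta+\chi_0\chi_1 V$ at $p_r=0$ and all the sign checks ($\chi'\leq 0$, $\chi_0,\chi_1,V\geq 0$, $\inf\chi'\geq -2$, $V\leq \frac{a}{r}$ and $\partial_rV+\frac{2}{r}V\geq0$ for $r>R_1$) are correct.
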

\begin{proof}
Let $x=(r,\theta, p_r, p_\theta) \in T^*(\R^2\setminus B(R_1))$. By \eqref{defXH1} we have
$$
\{H_1,r\}(x) = p_r, \qquad \textrm{and}\qquad \{H_1, \{H_1, r\}\}(x)=\frac{p_\theta^2}{r^3}+\chi_0\partial_rV+\chi_0' V.
$$
Thus if $x \in T^*(\R^2\setminus B(R_1)\}\cap \{\{H_1,r\}=0\}$, then $p_r=0$. 

For $x\in H_1^{-1}(c)\cap\{\{H_1,r\}=0\}\cap T^*(\R^2\setminus B(R_1)\}$, we can calculate
\begin{align}
\{H_1, \{H_1, r\}\}(x) & =\frac{p_\theta^2}{r^3}+\chi_0\partial_rV+\chi_0' V\nonumber \\
& = \frac{2}{r}\left( \frac{p_\theta^2}{2r^2}-\chi_0 V\right)+\chi_0\left( \frac{2}{r}V+ \partial_rV\right)+\chi_0' V\nonumber\\
& = \frac{2}{r}(H_1-p_\theta)+\chi_0\left( \frac{2}{r}V+ \partial_rV\right)+\chi_0' V\nonumber\\
& = \frac{2}{r}(H_1-p_\theta)+\chi_0\left( \frac{2}{r}V+ \partial_rV\right)-\frac{2}{R_2-R_1} V\nonumber\\
& \geq  \frac{2}{r}\left( H_1-p_\theta-\frac{a}{R_2-R_1}\right)\label{H1H1r}\\
& \geq \frac{2}{r}\left( e-\frac{a}{R_2-R_1}\right)=\frac{e}{r}>0,\nonumber
\end{align}
where the last two inequalities are a consequence of \eqref{DefV} and the assumption $R_2=R_1+2\frac{a}{e}$ respectively.

\end{proof}

To consider the setting of a potential function $V$ as in \eqref{DefV}, which is not rotationally invariant we need to keep in mind that the angular momentum $p_\theta$ is not preserved by the Hamiltonian flow any more. To remedy that, we introduce
more restrictions on the energy value of the Hamiltonian. 

The following lemma proves a property of Hamiltonian $H_1$, which assures that the corresponding solutions $(v,\eta)$ to \eqref{Def(v,eta)} starting and ending in $T^*B(R_1)$, do not obtain maximum of the function $r\circ v$ in the set $H_1^{-1}(c)\cap T^*(B(R_2)\setminus B(R_1))$:

\begin{lem}\label{lem:noMax}
Consider a potential function as in \eqref{DefV}. Choose an energy value $c> 0$ as in \eqref{CondC}.
Then for a Hamiltonian $H_1$ as in \eqref{DefH1} with $R_2:= \frac{1}{8a}\left( c^2+2aR_1\right)$ we have
$$
H_1^{-1}(c)\cap\{\{H_1,r\}=0\}\cap \{\{H_1,\{H_1,r\}\}\leq 0\}\cap T^*(B(R_2)\setminus B(R_1))=\emptyset.
$$
\end{lem}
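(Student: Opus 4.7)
My plan is to argue by contradiction. Suppose some $x = (r, \theta, p_r, p_\theta)$ lies in the four-fold intersection on the left-hand side. Then, exactly as in the opening of the proof of Lemma \ref{lem:emptySet}, the condition $\{H_1, r\}(x) = 0$ together with \eqref{defXH1} forces $p_r = 0$, and the chain of inequalities culminating in \eqref{H1H1r} applies without change to yield
\begin{equation*}
\{H_1, \{H_1, r\}\}(x) \;\geq\; \frac{2}{r}\left( H_1(x) - p_\theta - \frac{a}{R_2 - R_1}\right) \;=\; \frac{2}{r}\left( c - p_\theta - \frac{a}{R_2 - R_1}\right),
\end{equation*}
using $H_1(x) = c$. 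Thus, to contradict $\{H_1, \{H_1, r\}\}(x) \leq 0$, it suffices to show $c - p_\theta > \frac{a}{R_2 - R_1}$.

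For this lower bound on $c - p_\theta$ I would re-run the argument of Lemma \ref{lem:energy} with $H_1$ in place of $H$. The only property of the potential used there is $V \leq a/r$, and since $0 \leq \chi_0 V \leq V$, the identical computation produces
\begin{equation*}
c - p_\theta(x) \;>\; \frac{c^2 - 2ar}{2(c + r^2)} \;\geq\; \frac{c^2 - 2aR_2}{2(c + R_2^2)},
\end{equation*}
where the last step uses the monotonicity already established inside the proof of Lemma \ref{lem:energy}, valid as long as $R_2 < c^2/(2a)$. Both this bound and $R_2 > R_1$ follow easily from \eqref{CondC}, which in particular forces $c^2 > 12 a R_1$. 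The lemma is therefore reduced to the purely algebraic statement $\frac{c^2 - 2aR_2}{2(c + R_2^2)} > \frac{a}{R_2 - R_1}$.

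The main obstacle, and the point at which the specific choices in \eqref{CondC} and of $R_2 = (c^2 + 2aR_1)/(8a)$ both enter, is the verification of this final inequality. Substituting the formula for $R_2$ gives $R_2 - R_1 = (c^2 - 6aR_1)/(8a)$, $c^2 - 2aR_2 = (3c^2 - 2aR_1)/4$, and $64 a^2 R_2^2 = (c^2 + 2aR_1)^2$; after clearing denominators and expanding, the target inequality collapses to
\begin{equation*}
c^4 - 12 a R_1 c^2 + 4 a^2 R_1^2 \;>\; 32 a^2 c.
\end{equation*}
The two clauses of \eqref{CondC} are tailored precisely to produce this: $c^2 > 4a(3 R_1 + 2\sqrt[3]{2a})$ yields $c^2 - 12 a R_1 > 8 a \sqrt[3]{2 a}$, while $c^3 > 32 a^2$ is equivalent to $c \sqrt[3]{2a} > 4 a$. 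Multiplying the two inequalities gives $c^2(c^2 - 12 a R_1) > 8 a c \cdot c\sqrt[3]{2a} > 32 a^2 c$, and adding the nonnegative term $4 a^2 R_1^2$ on the left preserves the bound, delivering the required contradiction.
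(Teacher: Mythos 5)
Your proposal is correct and follows essentially the same route as the paper: the reduction via \eqref{defXH1} and \eqref{H1H1r}, the lower bound $c-p_\theta\geq\frac{c^2-2aR_2}{2(c+R_2^2)}$ obtained by rerunning Lemma \ref{lem:energy} for $H_1$ (using $\chi_0 V\leq V\leq a/r$ and $R_2<c^2/(2a)$), and the final algebraic inequality $c^4-12aR_1c^2+4a^2R_1^2>32a^2c$, which is exactly the numerator the paper bounds using the two clauses of \eqref{CondC}. The verification is complete; no gaps.
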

\begin{proof}
By assumption \eqref{CondC} we have
\begin{align}
R_2 & = \frac{1}{8a}\left( c^2+2aR_1\right) \geq \frac{1}{8a}\left( 12aR_1+2aR_1\right)=\frac{7}{4} R_1 \nonumber\\
R_2 & = \frac{1}{8a}\left( c^2+2aR_1\right)\leq\frac{1}{8a}\left( c^2+\frac{1}{6}c^2\right)=\frac{7c^2}{48 a}<\frac{c^2}{2a}.\label{R2small}
\end{align}

Moreover, observe that for every $x=(r,\theta, p_r, p_\theta) \in T^*(\R^2\setminus B(R_1))\cap H_1^{-1}(c)$ we have
$$
c-p_\theta \geq \frac{p_\theta^2}{2 r^2}-\chi_0(r)V(r,\theta)\geq \frac{p_\theta^2}{2 r^2}-\frac{a}{r}.
$$
Consequently, by a similar argument as the one presented in the proof of Lemma \ref{lem:energy}, we can infer that for all $x\in H^{-1}(c)\cap T^*(B(R_2)\setminus B(R_1))$ we have
$$
c-p_\theta \geq \frac{c^2-2aR_2}{2(c+R_2^2)}.
$$
Now we can use relation \eqref{H1H1r}, the inequalities above, and assumption\linebreak $R_2=\frac{1}{8a}\left( c^2+2aR_1\right)$ to estimate
\begin{align*}
\{H_1, \{H_1, r\}\}(x) & \geq  \frac{2}{r}\left( c-p_\theta-\frac{a}{R_2-R_1}\right)\\
& \geq \frac{2}{r}\left(\frac{c^2-2aR_2}{2(c+R_2^2)}-\frac{a}{R_2-R_1}\right)\\
& = \frac{-4a R_2^2+R_2(c^2+2aR_1)-c(2a+cR_1)}{r(c+R_2^2)(R_2-R_1)}\\
& = \frac{\frac{1}{16a}\left( c^2+2aR_1\right)^2-c(2a+cR_1)}{r(c+R_2^2)(R_2-R_1)}\\
& = \frac{c^2\left(c^2-12aR_1\right)-32a^2c+4a^2R_1^2}{16ar(c+R_2^2)(R_2-R_1)}\\
& \geq \frac{8ac^2\sqrt[3]{2a}-32a^2c+4a^2R_1^2}{16ar(c+R_2^2)(R_2-R_1)}\\
& = \frac{2c\left(c\sqrt[3]{2a}-4a\right)+aR_1^2}{4r(c+R_2^2)(R_2-R_1)}\\
& \geq \frac{aR_1^2}{4r(c+R_2^2)(R_2-R_1)}>0.
\end{align*}
The estimates in the last three lines come from the conditions \eqref{CondC} on the energy value $c$.
\end{proof}

Now we are finally ready to prove Proposition \ref{prop:chords}:

\vspace*{.5cm}
\noindent \textit{Proof of Proposition \ref{prop:chords}:}
Observe that in view of Remark \ref{rem:equality}, to show that\linebreak $\Crit (\A^{H_1-c}_{q_0,q_1})\subseteq \Crit (\A^{H-c}_{q_0,q_1})$ it suffices to construct a Hamiltonian $H_1$, satisfying 
$$
\Crit \A_{q_0,q_1}^{H_1-c}=\left\lbrace (v,\eta) \in \Crit \A_{q_0,q_1}^{H_1-c} \ \big|\ v([0,1])\subseteq T^*B(R_1) \right\rbrace.
$$

To prove part:
\begin{enumerate}[label=\arabic*.]
\item We choose an energy value $c>0$ satisfying \eqref{CondC} define the Hamiltonian $H_1$ as in \eqref{DefH1} with $R_2:=\frac{1}{8a}(c^2+2aR_1)$ .
\item We choose an energy value $c \geq \max\{\inf V, \sqrt{2aR_1}\}$ and define the Hamiltonian $H_1$ as in \eqref{DefH1} with
\begin{equation}\label{parameters}
e:= \frac{c^2-2aR_1}{2(c+R_1^2)} \qquad \textrm{and} \qquad R_2:=R_1+2\frac{a}{e}.
\end{equation}
\end{enumerate}

For this choice of $c$ and $H_1$ will show that all $(v,\eta) \in \Crit (\A^{H_1-c}_{q_0,q_1})$, we have $v([0,1]) \subseteq T^*B(R_1)$, where $B(R_1)$ is a ball of radius $R_1$. 
We will argue by contradiction. Let $(v,\eta) \in \Crit (\A^{H_1}_{q_0,q_1})$, denote $v(t)=(r(t), \theta(t), p_r(t), p_\theta(t))$, and suppose there exists $t_1 \in (0,1)$, such that $r(t_1)=\max_{t\in [0,1]}r(t)>R_1$. Then
\begin{align*}
\frac{d}{dt} r(t)& =\eta \{H_1, r\}\circ v(t_1) = 0,\\ 
\frac{d^2}{dt^2}r(t) & =\eta^2\{H_1, \{H_1, r\}\}\circ v(t_1) \leq 0.
\end{align*}
In other words, we know that
\begin{equation}\label{v(t1)}
v(t_1)\in H_1^{-1}(c)\cap\{\{H_1,r\}=0\}\cap \{\{H_1,\{H_1,r\}\}\leq 0\}\cap T^*(\R^2\setminus B(R_1)).
\end{equation}

\noindent\textit{Proof of Part 1:}

By Lemma \ref{lem:noMax} we know that for energy values $c$ as in \eqref{CondC} we have
$$
H_1^{-1}(c)\cap\{\{H_1,r\}=0\}\cap \{\{H_1,\{H_1,r\}\}\leq 0\}\cap T^*(B(R_2)\setminus B(R_1))=\emptyset.
$$
Therefore, we know that
$$
v(t_1)\in H_1^{-1}(c)\cap\{\{H_1,r\}=0\}\cap \{\{H_1,\{H_1,r\}\}\leq 0\}\cap T^*(\R^2\setminus B(R_2)).
$$
By construction, we have $H_1|_{T^*(\R^2\setminus B(R_2))}=H_0$ and the Hamiltonian flow of $H_0$ preserves the angular momentum $p_\theta$. Consequently, 
$$
\textrm{if}\qquad t_0:= \max\{ t \leq t_1\ |\ r(t) \leq R_2\},\qquad\textrm{then}\qquad p_\theta(t_0)=p_\theta(t_1).
$$
Using \eqref{R2small} and Lemma \ref{lem:energy} we can estimate:
\begin{align*}
c-p_\theta(t_1)&=c-p_\theta(t_0)\geq \frac{c^2-2aR_2}{2(c+R_2^2)}\geq \frac{17c^2}{48(c+R_2^2)}>0,\\
\{H_1, \{H_1, r\}\}\circ v(t_1) & = \{H_0, \{H_0, r\}\}\circ v(t_1)=\frac{p_\theta^2(t_1)}{r^3(t_1)}=\frac{1}{r(t_1)}\left( c-p_\theta(t_1)\right)>0,
\end{align*}
which contradicts \eqref{v(t1)} and proves that for all $(v,\eta) \in \Crit (\A^{H_1-c}_{q_0,q_1})$, we have $v([0,1]) \subseteq T^*B(R_1)$.

\noindent\textit{Proof of Part 2:}

First observe that by Remark \ref{rem:equality} and Proposition \ref{prop:RotBound} for all pairs $q_0, q_1\in \R^2$, $|q_0|, |q_1|\leq R_1$ and all Hamiltonians $H_1$ of the form \eqref{DefH1} with energy values\linebreak $c> \sqrt{2aR_1}$ and $R_2>R_1$, 
we have $\Crit\A_{q_0,q_1}^{H-c}\subseteq \Crit\A_{q_0,q_1}^{H_1-c}$. Now we will prove that for Hamiltonian $H_1$ defined as in \eqref{DefH1} with parameter $R_2$ as in \eqref{parameters}, the opposite inclusion is satisfied.

By assumption the potential function $V$ is rotationally invariant, hence we also have $\partial_\theta(\chi_0\chi_1 V)=0$ outside of $B(R_1)$. Consequently, the flow of $H_1$ outside of $T*B(R_1)$ preserves the angular momentum:
$$
\frac{d}{dt}p_\theta(t)=\{H_1, p_\theta\}\circ v(t)= \partial_\theta H_1 \circ v(t)= -\partial_\theta (\chi_0 \chi_1 V) \circ v(t)=0.
$$
Consequently, 
$$
\textrm{if}\qquad t_0:= \max\{ t \leq t_1\ |\ r(t) \leq R_2\},\qquad\textrm{then}\qquad p_\theta(t_0)=p_\theta(t_1).
$$
and by Lemma \ref{lem:energy} we can estimate:
$$
c-p_\theta(t_1)=c-p_\theta(t_0)\geq \frac{c^2-2aR_1}{2(c+R_1^2)}=e>0.
$$
In other words, we obtain that
$$
v(t_1)\in \{\{H_1,r\}=0\}\cap \{\{H_1,\{H_1,r\}\}\leq 0\}\cap \left\lbrace H_1-p_\theta \geq e\right\rbrace\cap T^*(\R^2\setminus B(R_1)).
$$
But by our choice of $R_2$ in the definition of $H_1$ and by Lemma \ref{lem:emptySet} we know that the set above is empty, which brings us to a contradiction.

\hfill $\square$

\subsection{Three body problem} In this subsection we will apply the results of Theorem \ref{thm:ExistenceOfChords} to a Hamiltonian which outside a compact set coincides with the energy function in the restricted circular three body problem.

\noindent\textit{Proof of Theorem \ref{thm:Chords3BD}:}

To prove Theorem \ref{thm:Chords3BD}, it suffices to show that 
the potential function $V$ as in \eqref{3BDPotential} satisfies the conditions \eqref{DefV} for the constants $R_1$ and $a$ as in \eqref{Cond3BD}.

For $r\geq R_1>1-\mu>\mu>0$ we can calculate:
\begin{align*}
r V(r,\theta) & = \frac{(1-\mu)r}{\sqrt{r^2+2r\mu \cos \theta +\mu^2}}+\frac{\mu r}{\sqrt{r^2-2r(1-\mu) \cos \theta +(1-\mu)^2}}\\
& \leq \frac{(1-\mu)r}{r-\mu}+\frac{\mu r}{r- (1-\mu)} \leq \frac{(1-\mu)R_1}{R_1-\mu}+\frac{\mu R_1}{R_1- (1-\mu)},
\end{align*}
where the last inequality comes from the fact that
$$
\frac{d}{dr}\left( \frac{(1-\mu)r}{r-\mu}+\frac{\mu r}{r- (1-\mu)}\right) = -\mu(1-\mu)\left(\frac{1}{(r-\mu)^2} + \frac{1}{(r-(1-\mu))^2}\right)<0.
$$
This proves that for $r\geq R_1$ the potential function $V$ satisfies $V(r,\theta) \leq \frac{a}{r}$ with\linebreak $a:=\frac{(1-\mu)R_1}{R_1-\mu}+\frac{\mu R_1}{R_1- (1-\mu)}$.

To check the second condition of \eqref{DefV} we calculate
\begin{align*}
\partial_r V(r,\theta) & = -\frac{(1-\mu)(r+\mu \cos \theta)}{\left(r^2+2r\mu \cos \theta +\mu^2\right)^{\frac{3}{2}}}-\frac{\mu (r-(1-\mu)\cos \theta)}{\left(r^2-2r(1-\mu) \cos \theta +(1-\mu)^2\right)^{\frac{3}{2}}}\\
r \partial_r V+2V & = \frac{(1-\mu)}{\sqrt{r^2+2r\mu \cos \theta +\mu^2}}\left( 2 - \frac{r(r+\mu \cos \theta)}{r^2+2r\mu \cos \theta +\mu^2}\right)\\
& + \frac{\mu}{\sqrt{r^2-2r(1-\mu) \cos \theta +(1-\mu)^2}}\left( 2 - \frac{r(r-(1-\mu)\cos \theta)}{r^2-2r(1-\mu) \cos \theta +(1-\mu)^2}\right)\\
& = \frac{(1-\mu)\left(r^2+3r\mu \cos\theta + 2 \mu^2 \right)}{\left(r^2+2r\mu \cos \theta +\mu^2\right)^{\frac{3}{2}}}+\frac{\mu (r^2-3 r (1-\mu)\cos\theta + 2 (1-\mu)^2)}{\left(r^2-2r(1-\mu) \cos \theta +(1-\mu)^2\right)^{\frac{3}{2}}}\\
& \geq \frac{(1-\mu)\left(r^2-3r\mu + 2 \mu^2 \right)}{\left(r^2+2r\mu \cos \theta +\mu^2\right)^{\frac{3}{2}}}+\frac{\mu (r^2-3 r (1-\mu) + 2 (1-\mu)^2)}{\left(r^2-2r(1-\mu) \cos \theta +(1-\mu)^2\right)^{\frac{3}{2}}}\\
& = \frac{(1-\mu)(r-\mu)( r-2 \mu )}{\left(r^2+2r\mu \cos \theta +\mu^2\right)^{\frac{3}{2}}}+\frac{\mu (r-(1-\mu))(r - 2 (1-\mu))}{\left(r^2-2r(1-\mu) \cos \theta +(1-\mu)^2\right)^{\frac{3}{2}}}\geq 0,
\end{align*}
for all $r\geq 2(1-\mu)\geq 2\mu>0$. Thus both condition of \eqref{DefV} are satisfied and we can apply Theorem \ref{thm:ExistenceOfChords} to a Hamiltonian which which outside a compact set agrees with the energy function in the restricted circular three body problem.

\hfill $\square$

\printbibliography
\end{document}